\newsavebox{\theorembox}
\newsavebox{\lemmabox}
\newsavebox{\corollarybox}
\newsavebox{\propositionbox}
\newsavebox{\examplebox}
\newsavebox{\conjecturebox}
\newsavebox{\algbox}
\newsavebox{\qbox}
\newsavebox{\problembox}
\newsavebox{\definitionbox}
\newsavebox{\assumptionbox}
\newsavebox{\hypothesisbox}
\savebox{\theorembox}{\noindent\bf Theorem}
\savebox{\lemmabox}{\noindent\bf Lemma}
\savebox{\corollarybox}{\noindent\bf Corollary}
\savebox{\propositionbox}{\noindent\bf Proposition}
\savebox{\examplebox}{\noindent\bf Example}
\savebox{\conjecturebox}{\noindent\bf Conjecture}
\savebox{\algbox}{\noindent\bf Algorithm}
\savebox{\qbox}{\noindent\bf Question}
\savebox{\definitionbox}{\noindent\bf Definition}
\savebox{\problembox}{\noindent\bf Problem}
\savebox{\assumptionbox}{\noindent\bf Assumption}
\savebox{\hypothesisbox}{\noindent\bf Hypothesis}
\newtheorem{theorem}{\usebox{\theorembox}}
\newtheorem{lemma}[theorem]{\usebox{\lemmabox}}
\newtheorem{definition}{\usebox{\definitionbox}}
\def\bq{{\mathbf q}}
\def\bX{\mathbf{X}}
\def\bY{\mathbf{Y}}
\def\bx{\mathbf{x}}
\def\bu{\mathbf{u}}
\def\bv{\mathbf{v}}
\def\TV{{\rm TV}}
\def\bZ{\mathbf{Z}}
\def\<{\langle}
\def\bnd{\beta^{{\rm 2nd}}}
\def\lnd{\lambda^{{\rm 2nd}}}
\def\>{\rangle}
\def\bbS{\mathbb{S}}
\def\reals{\mathbb{R}}
\def\ok{{^{\otimes k}}}
\def\naturals{{\mathbb N}}
\def\E{{\mathbb E}} 
\def\prob{{\mathbb P}}
\def\cF{{\cal F}}
\def\de{{\rm d}}
\def\bone{{\mathbf{1}}}
\def\bG{\mathbf{G}}
\def\normal{{\sf N}}
\def\bA{\mathbf{A}}
\def\bR{\mathbf{R}}
\def\bB{\mathbf{B}}
\def\be{{\mathbf e}}
\def\ind{{\mathbb{I}}}
\def\eps{\varepsilon}
\def\ed{\stackrel{{\rm d}}{=}}
\def\bone{{\mathbf{1}}}
\def\sT{{\sf T}}
\def\trace{\text{\rm Tr}}
\begin{document}

\title{On the limitation of spectral methods:\\ From the Gaussian hidden clique
problem to rank one perturbations of Gaussian tensors}
\author{Andrea Montanari\thanks{Department of Electrical
    Engineering and Department of Statistics, Stanford
    University. {\tt montanari@stanford.edu}. Partially supported by the NSF grant CCF-1319979 and the grants AFOSR/DARPA
FA9550-12-1-0411 and FA9550-13-1-0036} \and Daniel Reichman\thanks{Computer Science department, Cornell University, Ithaca, NY, 14853. {\tt
    daniel.reichman@gmail.com}. Work done
at the Weizmann Institute and supported in part by The Israel Science Foundation (grant No. 621/12)}
    \and Ofer Zeitouni\thanks{Faculty of  Mathematics, Weizmann Institute, Rehovot 76100, Israel and Courant Institute, New York University.
            {\tt ofer.zeitouni@weizmann.ac.il}. Partially supported by a grant from
the Israel Science Foundation and the Herman  P. Taubman chair of Mathematics at the Weizmann Institute.}}
\date{November 18, 2014}
\maketitle
\begin{abstract}
We consider the following detection problem: given a realization of a
symmetric matrix $\bX$ of dimension $n$, distinguish between the hypothesis
that all upper triangular variables are i.i.d. Gaussians variables
with mean 0 and variance $1$ and the hypothesis where
$\bX$ is the sum of such matrix and an independent rank-one
perturbation.

This setup applies to the situation where
under the alternative,
there is a
planted principal submatrix $\bB$ of size $L$ for which all upper triangular
variables are i.i.d. Gaussians with mean $1$ and variance $1$, whereas
all other upper triangular elements of $\bX$ not in $\bB$ are i.i.d.
Gaussians variables with mean 0 and variance $1$. We refer to this as
the `Gaussian hidden clique problem.'

When $L=(1+\epsilon)\sqrt{n}$ ($\epsilon>0$), it is possible to solve this
detection problem with probability $1-o_n(1)$ by computing the
spectrum of $\bX$ and considering the largest eigenvalue of $\bX$.
We prove that this condition is tight in the following sense: when
$L<(1-\epsilon)\sqrt{n}$ no algorithm that examines only the
eigenvalues of $\bX$
can detect the existence of a hidden
Gaussian clique, with error probability vanishing as $n\to\infty$.

We prove this result as an immediate consequence of a more general
result on rank-one 
perturbations of $k$-dimensional Gaussian tensors.
In this context we establish a lower bound on the critical
signal-to-noise ratio below which a rank-one signal cannot be
detected.
\end{abstract}
\section{Introduction}

Consider the following detection problem.
One is given a symmetric matrix $\bX=\bX(n)$ of dimension $n$, such that
the ${n \choose 2}+n$ entries
$(\bX_{i,j})_{i\leq j}$
are \emph{mutually independent} random variables. Given (a realization of) $\bX$ one
would like to distinguish between the hypothesis
that all random variables $\bX_{i,j}$ have the same distribution
$F_0$ to the hypothesis where
there is a set $U \subseteq [n]$ so that all
random variables in the submatrix $\bX_{U}:=(\bX_{s,t}:s,t \in U)$
have a distribution $F_1$ which is different from the
distribution of all other elements in $\bX$ which are still
distributed as $F_0$.

 The same problem was recently studied in 
\cite{ADD,Desh} and, for the of the
asymmetric case (where no symmetry assumption is imposed on the independent
entries of $\bX$),  in
\cite{Balla,Kollar,Ma}. 
We refer to Section \ref{sec-relwork} for further discussion of the related literature.
An intriguing outcome of these works is that, while the two hypothesis
are statistically distinguishable as soon as $L\ge C\log n$ (for $C$ a
sufficiently large constant) \cite{Behmidi}, practical
algorithms require significantly larger $L$. This motivates the study
of restricted classes of tests. In this paper we study the class of
spectral (or eigenvalue-based) tests detecting the signal. Our
proof technique naturally allow to consider two further
generalizations of this problem that are of independent
interests. We briefly summarize our results below.

\vspace{0.2cm}

\noindent{\bf The Gaussian hidden clique problem.}
This is a special case of the above hypothesis testing setting,
whereby $F_0= \normal(0,1)$ and $F_1= \normal(1,1)$ (entries on the
diagonal are defined slightly differently for simplifying calculations).
Here and below $\normal(m,\sigma^2)$ denote the Gaussian distribution
of mean $m$ and variance
$\sigma^2$.  
Equivalently, let $\bZ$ be a random matrix from the Gaussian
Orthogonal Ensemble  (GOE) i.e. $\bZ_{ij}\sim \normal(0,1/n)$
independently for $i<j$, and $\bZ_{ii}\sim \normal(0,2/n)$.
Then, under hypothesis $H_{1,L}$ we have $\bX=
n^{-1/2}\bone_U\bone_U^{\sT}+\bZ$ ($\bone_U$ being the indicator vector
on $U$, and $|U|=L$),
and under hypothesis $H_{0}$, $\bX=\bZ$ (the factor $n$ in the
normalization is for technical convenience).

We then consider the following restricted hypothesis testing question.
Let $\lambda_1\ge \lambda_2\ge \cdots\ge \lambda_n$ be the ordered
eigenvalues of $\bX$. \emph{Is there a test that depends only on
  $\lambda_1,\dots,\lambda_n$ and that distinguishes $H_0$ from
  $H_{1,L}$ `reliably,' i.e. with error probability converging to $0$ as $n\to\infty$?}
Notice that the eigenvalues distribution does not depend on $U$ as
long as this is independent from the noise $\bZ$. We can therefore
think of $U$ as fixed for this question.

If $L\ge (1+\eps)\sqrt{n}$ then \cite{BBAP} implies that a simple test
checking whether $\lambda_1\ge 2+\delta$ for some $\delta
= \delta(\eps)>0$ is reliable. We prove that this result is tight, in
the sense that no spectral test is reliable for $L\le
(1-\eps)\sqrt{n}$.

\vspace{0.2cm}

\noindent{\bf Rank-one matrices in Gaussian noise.}
Our proof technique builds on a simple remark. Since the noise
$\bZ$ is invariant under orthogonal transformations\footnote{By this we mean
that, for any orthogonal matrix $\bR\in O(n)$, independent of $\bZ$,
$\bR\bZ\bR^{\sT}$ is distributed as $\bZ$.}, the above question is
equivalent to the following testing problem. For $\beta\in
\reals_{\ge 0}$, and $\bv\in\reals^n$, $\|\bv\|_2=1$
a uniformly random unit vector, test 
$H_0$: $\bX = \bZ$ versus $H_1$, $\bX = \beta\bv\bv^{\sT}+\bZ$. 
(The correspondence between the two problems yields $\beta = L/\sqrt{n}$.)

Again, this  problem (and a closely related asymmetric version
\cite{onatski2013asymptotic}) has been studied in the literature, and
it follows from \cite{BBAP} that a  reliable test exists for $\beta
\ge 1+\eps$. We provide a simple proof (based on the second moment
method) that no test is reliable for $\beta< 1+\eps$.

\vspace{0.2cm}

\noindent{\bf Rank-one tensors in Gaussian noise.} It turns our that
the same proof applies to an even more general problem: detecting a
rank-one signal in a noisy tensor. We carry out our analysis in this
more general setting for two reasons. First, we think that this
clarifies the what aspects of the model are important for our proof
technique to apply. Second, the problem estimating tensors from noisy data has
attracted significant interest recently within the machine learning
community \cite{Hsu,montanari2014statistical}. 

More precisely, we consider a noisy tensor $\bX\in\bigotimes^k\reals^n$, of the form $\bX =
\beta\,\bv\ok + \bZ$, where $\bZ$ is Gaussian noise, and $\bv$ is a
random unit vector. We consider the
problem of testing this hypothesis against $H_0$: $\bX =\bZ$. We
establish a threshold $\bnd_k$ such that no test  can be reliable for
$\beta<\bnd_k$ (in particular $\bnd_2=1$). We establish an analogous result for the asymmetric
case as well. 
Two differences are worth remarking for $k\ge 3$ with respect to the more familiar
matrix case $k=2$. First, we do not expect the second moment bound $\bnd_k$
to be tight, i.e. a reliable test to exist for all $\beta>\bnd_k$. On
the other hand, we can show that it is tight up to a universal ($k$
and $n$ independent) constant. Second, below $\bnd_k$ the problem is
more difficult than the matrix version below $\bnd_2=1$: not only no
reliable test exists but, asymptotically, any test behaves
asymptotically as random guessing.

\section{Main result for spectral detection}

Let $\bZ$ be a GOE matrix as defined in the previous
section. Equivalently if $\bG$ is an (asymmetric) matrix with i.i.d.
entries $\bG_{i,j}\sim\normal(0,1)$,
\begin{align}
\bZ = \frac{1}{\sqrt{2n}} \big(\bG+\bG^{\sT}\big)\, .
\end{align}
For a deterministic sequence of vectors $\bv(n)$, $\|\bv(n)\|_2 = 1$,
we consider the two hypotheses
\begin{align}
\begin{cases}H_0:\;\;\;& \bX = \bZ\, ,\\
H_{1,\beta}:\;\;\;& \bX = \beta \bv\bv^{\sT}+\bZ\, .
\end{cases}\label{eq:BetaProblem}
\end{align}
A special example is provided by the \emph{Gaussian hidden clique}
problem in which case $\beta= L/\sqrt{n}$ and $\bv = \bone_U/\sqrt{L}$
for 
some set $U\subseteq [n]$, $|U|=L$,
\begin{align}
\begin{cases}
H_0:\;\;\;& \bX = \bZ\, ,\\
H_{1,L}:\;\;\;& \bX = \frac{1}{\sqrt{n}} \bone_U\bone_U^{\sT}+\bZ\, .
\end{cases}\label{eq:LProblem}
\end{align}
Observe that the distribution of eigenvalues of $\bX$, under either
alternative, is invariant to
the choice of the vector $\bv$ (or subset $U$),  as long as the norm
of $\bv$ is kept fixed.
Therefore, any successful spectral algorithm will
distinguish between $H_0$ and $H_{1,\beta}$
but not give any information on the vector $\bv$
(or subset $U$, in the case
of $H_{1,L}$). 

We let $Q_0=Q_0(n)$ (respectively, $Q_1=Q_1(n)$)
denote the distribution of the eigenvalues of $\bX$
under $H_0$ (respectively $H_{1}=H_{1,\beta}$ or $H_{1,L}$).

A \emph{spectral statistical test} for distinguishing between
$H_0$ and $H_{1}$ (or simply a spectral test) is a
measurable map $T_n:(\lambda_1,\ldots,\lambda_n)\mapsto \{0,1\}$.
To formulate precisely what we mean by the word \textit{distinguish}, we
introduce the following notion.
\begin{definition}
        \label{def-cont}
For each $n\in\naturals$, let $\prob_{0,n}$, $\prob_{1,n}$ be two
probability measures on  the same measure space $(\Omega_n,\cF_n)$.
We say that the sequence $(\prob_{1,n})$ is contiguous with respect to
$(\prob_{0,n})$ if, for any sequence of events $A_n\in \cF_n$,
\begin{align}
\lim_{n\to\infty} \prob_{0,n}(A_n) = 0
\;\;\Rightarrow\;\;\lim_{n\to\infty} \prob_{1,n}(A_n) = 0 \, .
\end{align}
\end{definition}
\noindent Note that contiguity is not in general a symmetric relation.

In the context of the spectral statistical tests described above, the sequences
$A_n$ in Definition \ref{def-cont} (with $P_n=Q_0(n)$ and
$Q_n=Q_{1}(n)$) can be put in correspondence
with spectral
statistical tests $T_n$ by taking $A_n=\{(\lambda_1,\ldots,\lambda_n):
T_n(\lambda_1,\ldots,\lambda_n)=0\}$. We will thus say that
$H_{1}$ is \textit{spectrally contiguous} with respect to $H_0$
if $Q_n$ is contiguous with respect to $P_n$.

Our main result on the Gaussian hidden clique problem is the following.
\begin{theorem}\label{thm:main}
For any sequence
  $L=L(n)$ satisfying $\limsup_{n\to\infty} L(n)/\sqrt{n}<1$,
  the hypotheses $H_{1,L}$ are
  spectrally contiguous with respect to $H_0$.

Equivalently for any sequence of vectors $\bv(n)$, and any
$\beta=\beta(n)$ satisfying $\limsup_{n\to\infty} \beta(n)<1$,
  the hypotheses $H_{1,\beta}$ are
  spectrally contiguous with respect to $H_0$.
\end{theorem}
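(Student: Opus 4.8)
The plan is to establish spectral contiguity via the second moment method applied to the likelihood ratio of the \emph{eigenvalue} distributions $Q_1$ with respect to $Q_0$. Because $\bZ$ is orthogonally invariant, under $H_{1,\beta}$ the eigenvalue law depends only on $\beta$ (equivalently $L/\sqrt n$), so we may freeze $\bv$ once and for all. The standard criterion is: if $\E_{Q_0}[(\de Q_1/\de Q_0)^2]$ stays bounded as $n\to\infty$, then $Q_1$ is contiguous with respect to $Q_0$. So the first step is to obtain a usable expression for $\de Q_1/\de Q_0$ as a function of the eigenvalues $\lambda_1\ge\cdots\ge\lambda_n$ of $\bX$.

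The key computation is to integrate out the perturbation direction. Write the joint density of the \emph{matrix} $\bX$ under $H_{1,\beta}$ as a Gaussian density centred at $\beta\bv\bv^{\sT}$, and average over a uniformly random unit $\bv$ (this is legitimate since it produces the same eigenvalue law). Relative to the $H_0$ Gaussian density, the matrix-level likelihood ratio is $\exp(\frac n2(\beta\trace(\bX\bv\bv^\sT) - \tfrac{\beta^2}{2}\|\bv\bv^\sT\|_F^2))$-type expression, i.e. $\exp(\tfrac{n}{2}\beta\, \bv^\sT\bX\bv - \tfrac{n\beta^2}{4})$, and averaging over $\bv\sim\mathrm{Unif}(\bbS^{n-1})$ gives a ratio that depends on $\bX$ only through its spectrum: $L(\lambda) = c_n\,\E_{\bv}\big[\exp(\tfrac{n\beta}{2}\,\bv^\sT\bX\bv)\big]$. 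Using the Hubbard--Stratonovich / spherical-integral representation, $\bv^\sT\bX\bv = \sum_i \lambda_i u_i^2$ with $u$ uniform on the sphere, one recognizes this as a rank-one spherical integral of Harish-Chandra--Itzykson--Zuber type, whose $n\to\infty$ asymptotics are governed by the $R$-transform of the empirical spectral distribution of $\bX$ (under $H_0$ this converges to the semicircle law on $[-2,2]$). Then the second moment $\E_{Q_0}[L(\lambda)^2]$ becomes a two-replica spherical integral $\E_{Q_0}\E_{\bv,\bv'}\exp(\tfrac{n\beta}{2}(\bv^\sT\bX\bv+\bv'^\sT\bX\bv'))$, and one evaluates it by Laplace's method in the overlap parameter $\langle\bv,\bv'\rangle$ together with concentration of the spectral functionals of the GOE.

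I would then carry out the Laplace analysis: the exponent of the two-replica integral, as a function of the overlap $q=\langle\bv,\bv'\rangle^2\in[0,1]$, has the schematic form $-\tfrac12\log(1-q) + (\text{semicircle }R\text{-transform terms in }\beta,q)$; the claim is that for $\beta<1$ the maximum over $q$ is attained at $q=0$ with value $0$, so that $\E_{Q_0}[L^2]$ stays bounded, while for $\beta>1$ a strictly positive interior maximizer appears (this is exactly the BBP transition at $\beta=1$, consistent with the sharpness claimed in the theorem). The precise input here is that $\int \log(\beta^{-1}-x)\,\de\mu_{sc}(x)$-type integrals and the associated variational problem have their critical point cross into $[-2,2]$ precisely at $\beta=1$. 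I also need the standard fluctuation facts for the GOE (convergence of $\frac1n\sum f(\lambda_i)$ with Gaussian fluctuations of order $1/n$, and $\lambda_1\to 2$) to justify replacing random spectral functionals by their deterministic limits inside the exponential without losing control of the exponentially large prefactors.

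The main obstacle I anticipate is controlling the contribution of the edge of the spectrum to the spherical integral: the Laplace/variational computation is clean in the bulk, but the rank-one integral $\E_{\bv}\exp(\tfrac{n\beta}{2}\bv^\sT\bX\bv)$ is sensitive to the top eigenvalue $\lambda_1$ and to large deviations of $\lambda_1$ above $2$, which a priori could inflate the second moment even when $\beta<1$. Handling this requires either a sufficiently strong large-deviation upper bound for $\lambda_1(\bZ)$ (probability $e^{-cn}$ of exceeding $2+\delta$, which must beat the $e^{O(n)}$ from the exponential weight) or a truncation argument restricting to the event $\{\lambda_1\le 2+\delta_n\}$ and showing the complement is negligible even after the change of measure to $Q_1$ — the latter itself needing that $\lambda_1$ under $H_{1,\beta}$ with $\beta<1$ still concentrates at $2$ (again BBP). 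Making the interchange of the Laplace asymptotics with the expectation over $\bX$ fully rigorous, uniformly in the overlap variable near the edge, is where the real work lies; everything else is the second-moment bookkeeping plus known semicircle/BBP facts.
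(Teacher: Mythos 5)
Your reduction to the matrix-level likelihood ratio $\Lambda(\bX)=e^{-n\beta^2/4}\,\E_{\bv}\exp\big(\tfrac{n\beta}{2}\bv^{\sT}\bX\bv\big)$ (averaging over a uniform $\bv$ so the ratio depends on $\bX$ only through its spectrum) is the same first step as the paper's, and your observation that $\E_{Q_0}[L^2]$ is a two-replica expectation is correct. The gap is in the order of integration. You fix the spectrum, evaluate the rank-one spherical integral via HCIZ / $R$-transform asymptotics, and only then average over the GOE; this is exactly what creates the edge-control problem you flag as ``where the real work lies,'' and that problem is not cosmetic: one must control $e^{O(n)}$-weighted large deviations of $\lambda_1$ uniformly near the transition, which is precisely the technically heavy part of \cite{onatski2013asymptotic} and relies on nontrivial random matrix input \cite{Gui}. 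As written, this step is not carried out, so the argument is incomplete.

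The paper sidesteps all of this with a Fubini swap. Writing
\begin{align*}
\E_{0}[\Lambda^2]
= e^{-n\beta^2/2}\,\E_{\bv_1,\bv_2}\,\E_{\bZ}\exp\Big(\tfrac{n\beta}{2}\big\langle\bZ,\bv_1\bv_1^{\sT}+\bv_2\bv_2^{\sT}\big\rangle\Big),
\end{align*}
and integrating out $\bZ$ \emph{first} via the elementary Gaussian identity $\E\{e^{\langle\bA,\bZ\rangle}\}=\exp\big(\|\bA\|_F^2/n\big)$, the GOE disappears entirely and one is left with $\E_{\bv_1,\bv_2}\exp\big(\tfrac{n\beta^2}{2}\langle\bv_1,\bv_2\rangle^2\big)$. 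This is a one-variable large-deviations computation for the overlap of two uniform points on $\bbS^{n-1}$ (Lemma \ref{lemma:LargeDev}); no $R$-transform, no semicircle law, no edge behavior, and the threshold $\beta=1$ drops out of $\log(1-q^2)\le -q^2$. This ordering is also what lets the paper generalize immediately to $k$-tensors, where the spherical-integral machinery you invoke is not available.
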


Several remarks are in order with respect to Definition~\ref{def-cont}
and Theorem~\ref{thm:main}. First, we rule out arbitrary spectral
tests - not just tests that have polynomial running time (in
$n$). Second, we \emph{do not} rule out the existence of a spectral
test (or even a spectral test running in polynomial time) that
distinguishes between $H_0$ and $H_{1,L}$ with total probability of error
$\prob_0(T=1)+\prob_1(T=0)\le 1-\delta$. Indeed, as discussed in Section
\ref{sec:Remarks}, it is quite easy to construct such a test.

Finally Theorem~\ref{thm:main} provides an example of two family of
distributions $H_0$ and $H_{1,L}$ such that the total variation distance
between $H_0$ and $H_{1,L}$ is $1-o_n(1)$ whereas
$H_0$ and $H_{1,L}$ are spectrally contiguous.

\subsection{Contiguity and integrability}
Contiguity is related to a notion of uniform  absolute
continuity of measures.
Recall that a
probability measure
$\mu$ on a measure space
is \emph{absolutely continuous} with respect to another probability measure
$\nu$ if for every measurable set $A$, $\nu(A)=0$ implies that $\mu(A)=0$,
in which case
there exists a $\nu$-integrable, non-negative function $f\equiv \frac{\de\mu}{\de\nu}$
(the
\emph{Radon-Nikodym derivative} of $\mu$
with respect to $\nu$), so that
$\mu(A)=\int_{A} f \, \de\nu$
for every measurable set $A$.
We then have the following known useful fact, which will be the basis for
proving contiguity, and whose proof  is given
for completeness.
\begin{lemma}\label{lemma:EasyLemma}
        Within the setting of Definition \ref{def-cont},
        assume that $\prob_{1,n}$
is absolutely continuous with respect to $\prob_{0,n}$, and denote by
$\Lambda_n\equiv \frac{\de \prob_{1,n}}{\de\prob_{0,n}}$ its
Radon-Nikodym derivative.

\noindent
(a) If $\lim\sup_{n\to\infty}\E_{0,n}(\Lambda_n^2)<\infty$, then $(\prob_{1,n})$ is contiguous with respect to
$(\prob_{0,n})$. \\
(b) If $\lim_{n\to\infty}\E_{0,n}(\Lambda_n^2) = 1$, then $\lim_{n\to\infty}\|\prob_{0,n}-\prob_{1,n}\|_{\TV}
= 0$, where $\|\cdot\|_{\TV}$ denotes the total variation distance, i.e.
$$\|\prob_{0,n}-\prob_{1,n}\|_{\TV}\equiv\sup_A |\prob_{0,n}(A)-\prob_{1,n}(A)\|.$$
\end{lemma}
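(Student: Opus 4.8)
The plan is to treat parts (a) and (b) in turn, in both cases exploiting that $\E_{0,n}(\Lambda_n) = 1$ (which holds automatically when $\prob_{1,n}\ll\prob_{0,n}$, since $\int\Lambda_n\,\de\prob_{0,n} = \prob_{1,n}(\Omega_n) = 1$) together with the second-moment hypothesis. For part (a), fix a sequence of events $A_n$ with $\prob_{0,n}(A_n)\to 0$. Since $\prob_{1,n}(A_n) = \int_{A_n}\Lambda_n\,\de\prob_{0,n} = \E_{0,n}(\Lambda_n\,\bone_{A_n})$, I would apply the Cauchy--Schwarz inequality to get
\begin{align}
\prob_{1,n}(A_n) \le \big(\E_{0,n}(\Lambda_n^2)\big)^{1/2}\,\big(\prob_{0,n}(A_n)\big)^{1/2}.
\end{align}
By hypothesis the first factor is bounded along the sequence (say by $\sqrt{C}$ for $n$ large), and the second factor tends to $0$, so $\prob_{1,n}(A_n)\to 0$, which is exactly contiguity.

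For part (b), the cleanest route is to bound the total variation distance directly in terms of an $L^2(\prob_{0,n})$ quantity. Writing $\|\prob_{0,n}-\prob_{1,n}\|_{\TV} = \sup_A|\prob_{0,n}(A)-\prob_{1,n}(A)| = \sup_A \E_{0,n}\big((1-\Lambda_n)\bone_A\big)$, and noting the supremum is attained on $A = \{\Lambda_n<1\}$, one has
\begin{align}
\|\prob_{0,n}-\prob_{1,n}\|_{\TV} = \E_{0,n}\big((1-\Lambda_n)^+\big) \le \E_{0,n}\big(|1-\Lambda_n|\big) \le \big(\E_{0,n}(1-\Lambda_n)^2\big)^{1/2}
\end{align}
by Cauchy--Schwarz (or Jensen). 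Now expand the square: $\E_{0,n}(1-\Lambda_n)^2 = 1 - 2\E_{0,n}(\Lambda_n) + \E_{0,n}(\Lambda_n^2) = \E_{0,n}(\Lambda_n^2) - 1$, again using $\E_{0,n}(\Lambda_n) = 1$. By the hypothesis of part (b) this converges to $1 - 1 = 0$, hence $\|\prob_{0,n}-\prob_{1,n}\|_{\TV}\to 0$.

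There is really no hard step here; the lemma is a packaging of the second-moment method and is standard. The only point that deserves a line of care is the identity $\E_{0,n}(\Lambda_n) = 1$, which is where absolute continuity (not merely the existence of the Radon--Nikodym derivative as an abstract object) is used, and the elementary fact that the supremum defining total variation is achieved by the set $\{\Lambda_n < 1\}$ — this follows since for any $A$, $\E_{0,n}((1-\Lambda_n)\bone_A) \le \E_{0,n}((1-\Lambda_n)\bone_{\{\Lambda_n<1\}})$ because adding points where $1-\Lambda_n\le 0$ only decreases the integral and omitting points where $1-\Lambda_n>0$ also decreases it. I would state these two observations explicitly and let the two displayed chains of inequalities carry the rest.
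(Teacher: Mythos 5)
Your proof is correct and follows essentially the same route as the paper: part (a) is the identical Cauchy--Schwarz argument, and part (b) is the same second-moment bound using $\E_{0,n}\Lambda_n=1$ to identify $\E_{0,n}(\Lambda_n-1)^2=\E_{0,n}\Lambda_n^2-1$. The only cosmetic difference is in (b), where the paper starts from the sharp identity $2\|\prob_{0,n}-\prob_{1,n}\|_{\TV}=\E_{0,n}|\Lambda_n-1|$, while you pass from $\E_{0,n}(1-\Lambda_n)^+$ to $\E_{0,n}|1-\Lambda_n|$ and so give up a factor of $2$ — immaterial for the conclusion that the total variation distance tends to zero.
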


\begin{proof}
(a) Let $A_n$ be any sequence of events
such that $\prob_{0,n}(A_n)\to 0$.
Then
\begin{align}
\prob_{1,n}(A_n) = \E_{0,n}\{\Lambda_n\bone_{A_n}\} \le
\E_{0,n}\{\Lambda_n^2\}^{1/2}\prob_{0,n}(A_n)^{1/2}\to 0\, ,
\end{align}
where the last limit follows since $\E_{0,n}\{\Lambda_n^2\}\le C$
for all $n$ large enough.

(b) Note that $\E_{0,n}\Lambda_{n} =1$,
whence
\begin{align}
2\|\prob_{0,n}-\prob_{1,n}\|_{\TV} = \E_{0,n}\{|\Lambda_n-1|\} \le
\E_{0,n}\{(\Lambda_n-1)^2\}^{1/2} = \sqrt{\E_{0,n}(\Lambda_n^2)-1}\, .
\end{align}
\end{proof}

\subsection{Method and structure of the paper}

Consider problem (\ref{eq:BetaProblem}).
We use the fact that the law of the eigenvalues under
both $H_0$ and $H_{1,\beta}$ are invariant under conjugations
by a orthogonal matrix.
Once we conjugate matrices sampled under the hypothesis
$H_{1,\beta}$ by an independent
orthogonal matrix sampled according to the Haar distribution,
we get a matrix distributed as
\begin{align}
\bX = \beta\bv\bv^{\sT}+\bZ\, ,
\end{align}
where $\bu$ is uniform on the $n$-dimensional sphere,
and $\bZ$ is a GOE matrix
(with off-diagonal entries of
variance $1/n$).
Letting $\prob_{1,n}$ denote the law of
$\beta \bu\bu^{\sT}+\bZ$ and
$\prob_{0,n}$ denote the law of $\bZ$,
we show that $\prob_{1,n}$
is contiguous with respect to $\prob_{0,n}$,
which implies
that the law of eigenvalues $Q_{1}(n)$ is  contiguous with respect to $Q_0(n)$.

To show the contiguity, we consider a more general setup,
of independent interest,
of
Gaussian tensors of order $k$,
and in that setup  show that
the Radon-Nikodym derivative $\Lambda_{n,L}=\frac{\de\prob_{1,n}}{\de \prob_{0,n}}$ is
uniformly square integrable under $\prob_{0,n}$; an
application of Lemma
\ref{lemma:EasyLemma} then quickly yields Theorem \ref{thm:main}.

The structure of the paper is as follows. In the next section, we
define formally the detection problem for a symmetric tensor of order
$k\geq 2$. We
show the existence of a threshold under which detection is not possible
(Theorem \ref{main:Tensor}), and show how Theorem \ref{thm:main} follows
from this.
Section
\ref{sec-proofthemTensor} is devoted to the proof of Theorem
\ref{main:Tensor}, and concludes with some additional remarks and
consequences of Theorem \ref{main:Tensor}.
Section \ref{sec-asym} treats the case of asymmetric Gaussian tensors.
Finally,
Section \ref{sec-relwork} is devoted to a description of
the relation between the Gaussian hidden clique problem and hidden clique problem
in computer science, and related literature.

\section{A symmetric tensor model and a reduction}
Exploiting rotational invariance,
we will reduce the spectral detection problem to a detection problem
involving a standard detection problem between random matrices.
Since the latter
generalizes to a tensor setup, we first introduce a general
Gaussian hypothesis testing for $k$-tensors, which is of independent
interest. We then explain how
the spectral detection problem reduces to the special case of $k=2$.

\subsection{Preliminaries and notation}
We use lower-case boldface for vectors
(e.g. $\bu$, $\bv$, and so
on) and upper-case boldface for matrices and tensors (e.g. $\bX,\bZ$,
and so on).
The ordinary scalar product and $\ell_p$ norm over vectors are denoted
by $\<\bu,\bv\> = \sum_{i=1}^n\bu_i\bv_i$, and $\|\bv\|_p$.
We write
$\bbS^{n-1}$ for the unit sphere in $n$ dimensions
\begin{align}
\bbS^{n-1} \equiv\big\{\bx\in\reals^n:\; \;\|\bx\|_2=1\big\}\, .
\end{align}

Given $\bX \in \bigotimes^k \reals^n$ a real $k$-th order tensor,
we let $\{\bX_{i_1,\dots,i_k}\}_{i_1,\dots,i_k}$ denote its coordinates.
The outer product of two tensors is $\bX\otimes \bY$, and, for
$\bv \in \reals^n$, we define $\bv^{\otimes k} = \bv \otimes \cdots
\otimes \bv \in \bigotimes^k\reals^n$
as the $k$-th outer power of $\bv$. We define the inner product of two tensors $\bX, \bY \in \bigotimes^k \reals^n$ as
\begin{align}
 \< \bX , \bY \> = \sum_{i_1, \cdots ,i_k \in [n] } \bX_{i_1, \cdots
  ,i_k} \bY_{i_1, \cdots ,i_k}~~.
\end{align}
We define the Frobenius (Euclidean) norm of a tensor $\bX$  by
$\|\bX\|_F = \sqrt{\<\bX,\bX\>}$, and its operator
norm by
\begin{align}
\|\bX\|_{op} \equiv\max \{ \< \bX,\bu_1\otimes \cdots \otimes \bu_k
\>~:~\forall i\in [k]~,~\|\bu_i\|_2\leq 1 \}.
\end{align}
It is easy to check that this is indeed a norm. For the special case
$k=2$, it reduces to the ordinary $\ell_2$ matrix operator norm
(equivalently, to the largest singular value of $\bX$).

For a permutation $\pi \in \mathfrak S_k$,
we will denote by $\bX^\pi$ the tensor
with permuted indices $\bX^\pi_{i_1, \cdots, i_k} = \bX_{\pi(i_1),
\cdots , \pi(i_k)}$. We call the tensor $\bX$ \emph{symmetric} if,
for any permutation $\pi \in \mathfrak S_k$, $\bX^\pi = \bX$.
It is proved \cite{Waterhouse90} that, for symmetric tensors, for symmetric tensors we have the equivalent
representation
\begin{align}
\|\bX\|_{op} \equiv\max \{ |\< \bX,\bu\ok\>|~:~~~\|\bu\|_2\leq 1 \}.
\end{align}
We define $\overline\reals \equiv \reals \cup \infty$ with the usual conventions of arithmetic operations.
\subsection{The symmetric tensor model and main result}

We denote by  $\bG \in \bigotimes^k \reals^n$  a tensor with
independent and identically distributed entries
$\bG_{i_1, \cdots, i_k}\sim\normal(0,1)$ (note that this tensor is not
symmetric).

We define the \emph{symmetric standard normal} noise tensor
$\bZ \in \bigotimes^k \reals^n$
by

\begin{align}\label{eq:symNoiseDefinition}
\bZ =\frac{1}{k!  }\sqrt{\frac2n
} \sum_{\pi \in \mathfrak S_k }
\bG^\pi\, .
\end{align}
Note  that the subset of entries with unequal indices form an i.i.d.
collection $\{\bZ_{i_1,i_2,\dots,i_k} \}_{i_1<\dots<i_k} \sim \normal(
0,2/(n(k!)))$.

With this normalization, we  have, for any symmetric tensor $\bA\in \bigotimes^k\reals^n$
\begin{align}
\E\big\{e^{\< \bA,\bZ \>}\big\} =\exp\Big\{\frac{1}{n} \|\bA\|_F^2\Big\}\, .\label{eq:ExpTensor}
\end{align}
We will also use the fact that $\bZ$ is invariant in distribution under conjugation by
orthogonal transformations, that is, that for any orthogonal matrix
$U\in O(n)$,
$\{\bZ_{i_1,\ldots,i_k}\}$
has the same distribution as
$\{\sum_{j_1,\ldots,j_k} \left(\prod_{\ell=1}^k U_{i_\ell,j_\ell}\right)\cdot
\bZ_{j_1,\ldots,j_k}\}$.

Given a parameter $\beta\in\reals_{\ge 0}$, we consider the following model for a random symmetric tensor
$\bX$:
\begin{align}
\bX \equiv \beta \,\bv\ok + \bZ\,,\label{eq:SymmetricModel}
\end{align}
with $\bZ$ a standard normal tensor, and $\bv$ uniformly
distributed over the unit sphere $\bbS^{n-1}$.
In the case $k=2$ this is the standard
rank-one deformation of a GOE matrix.

We let $\prob_{\beta}=\prob_{\beta}^{(k)}$ denote the law of
$\bX$ under model (\ref{eq:SymmetricModel}).
\begin{theorem}\label{main:Tensor}
For $k\ge 2$, let
\begin{align}
\bnd_k\equiv \inf_{q\in (0,1)} \sqrt{-\frac{1}{q^k}\log(1-q^2)}\, .
\end{align}
Assume $\beta<\bnd_k$. Then, for any $k\ge 3$, we have
\begin{align}
\lim_{n\to\infty}\big\|\prob_{\beta}-\prob_0\big\|_{\TV} = 0\, .
\end{align}
Further, for $k=2$ and $\beta<\bnd_k=1$, $\prob_{\beta}$ is contiguous
with respect to $\prob_0$.
\end{theorem}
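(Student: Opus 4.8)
The plan is to compute the Radon--Nikodym derivative $\Lambda_n = \de\prob_\beta/\de\prob_0$ explicitly using the Gaussian identity (\ref{eq:ExpTensor}), and then estimate its second moment $\E_0[\Lambda_n^2]$. First I would observe that, conditionally on $\bv$, the model (\ref{eq:SymmetricModel}) is just a translate of the Gaussian tensor $\bZ$ by the fixed symmetric tensor $\beta\bv\ok$, so the conditional density is the exponential-tilt factor $\exp\{\tfrac{n}{2}\cdot(\text{something})\}$; integrating over the uniform law of $\bv$ gives
\begin{align}
\Lambda_n = \E_{\bv}\Big[\exp\big\{ n\beta\langle \bv\ok,\bZ\rangle/\textstyle\sqrt{\cdot}- \tfrac{1}{\cdot}n\beta^2\big\}\Big].
\end{align}
(The precise constants come from (\ref{eq:ExpTensor}) applied with $\bA = \beta\bv\ok$, using $\|\bv\ok\|_F = 1$.) Squaring and taking $\E_0$, the Gaussian expectation over $\bZ$ can be carried out in closed form, and the cross term produces a factor $\exp\{ cn\beta^2 \langle \bv,\bv'\rangle^k\}$ where $\bv,\bv'$ are two independent uniform unit vectors. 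Hence $\E_0[\Lambda_n^2] = \E_{\bv,\bv'}\exp\{ n\beta^2 \langle\bv,\bv'\rangle^k\}$ up to getting the constant right, and since $\langle\bv,\bv'\rangle \ed R$ where $R$ is the first coordinate of a uniform unit vector (so $R^2 \sim \mathrm{Beta}(1/2,(n-1)/2)$), this becomes a one-dimensional integral.

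The heart of the matter is then a Laplace/large-deviations analysis of $\E[\exp\{n\beta^2 R^k\}]$ with $R$ concentrated near $0$ at scale $n^{-1/2}$. The density of $R$ behaves like $c\sqrt{n}(1-r^2)^{(n-3)/2}$, so one is looking at $\int \exp\{ n(\beta^2 r^k + \tfrac12\log(1-r^2))\} \sqrt{n}\,\de r$ (for $k$ even; for $k$ odd one splits $r>0$ and $r<0$ and the negative part contributes at most $1$). Writing $r = q$, the exponent's rate function is $\Phi(q) = \beta^2 q^k + \tfrac12\log(1-q^2)$. The condition $\beta < \bnd_k = \inf_{q\in(0,1)}\sqrt{-q^{-k}\log(1-q^2)}$ is exactly equivalent to $\beta^2 q^k < -\tfrac12\log(1-q^2)$ for all $q\in(0,1)$, i.e. $\Phi(q) < 0$ on $(0,1)$, with $\Phi(0)=0$. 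For $k=2$ one checks $\Phi$ stays $\le 0$ on a neighborhood of $0$ and the contribution of $q$ bounded away from $0$ is exponentially negligible; a careful Gaussian (Watson's lemma type) expansion around $q=0$ shows $\E_0[\Lambda_n^2]$ stays bounded — in fact for $k=2$ it converges to a finite constant $(1-\beta^2)^{-1/2}$ — which gives contiguity via Lemma~\ref{lemma:EasyLemma}(a). For $k\ge 3$, because the rate function near $0$ is dominated by the $\log(1-q^2)$ term (the $q^k$ perturbation is negligible at scale $n^{-1/2}$ when $k\ge3$), the integral converges to exactly $1$, so $\E_0[\Lambda_n^2]\to 1$ and Lemma~\ref{lemma:EasyLemma}(b) yields total variation convergence.

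The main obstacle I anticipate is making the Laplace analysis fully rigorous and uniform: one must (i) split the integral into a central region $|q|\le \delta$ and a tail $|q|>\delta$, showing the tail is $o(1)$ using that $\sup_{q\ge\delta}\Phi(q) < 0$ — which needs the strict inequality in the definition of $\bnd_k$ and continuity/behavior of $\Phi$ at $q\to 1$; (ii) in the central region, control the $\exp\{n\beta^2 q^k\}$ factor against the Gaussian-like density $\propto \sqrt{n}\,e^{-nq^2/2}$, which for $k=2$ requires $\beta<1$ precisely (the exponent becomes $-\tfrac{n}{2}(1-\beta^2)q^2$ plus lower order, still integrable) and for $k\ge3$ just needs $n q^k = o(nq^2)\cdot\sqrt{n}^{\,?}$ bookkeeping to see the perturbation vanishes; and (iii) handling the odd-$k$ case where $R^k$ can be negative — there the positive-$R$ half is the same as above and the negative half is bounded by $\E[\bone_{R<0}]\le 1$, so no new difficulty. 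A secondary technical point is justifying the differentiation-under-the-integral / Fubini steps that produce the clean formula for $\E_0[\Lambda_n^2]$; this is where one leans on the moment generating function identity (\ref{eq:ExpTensor}) holding for the deterministic symmetric tensor $\beta\bv\ok$ and on the boundedness that makes all interchanges legitimate. Finally, to deduce Theorem~\ref{thm:main} one notes that spectral contiguity follows because the eigenvalue law is a (measurable) pushforward of the matrix law, and pushforwards preserve contiguity.
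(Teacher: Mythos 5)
Your proposal follows essentially the same route as the paper: compute $\Lambda$ as an average over $\bv$ of Gaussian tilt factors, square and use the MGF identity~(\ref{eq:ExpTensor}) plus rotational invariance to reduce $\E_0[\Lambda^2]$ to a one-dimensional integral over the overlap $q=\<\bv,\bv'\>$, then do a Laplace/large-deviations split into a tail region (killed because the rate function is strictly negative away from $0$, which is exactly the content of $\beta<\bnd_k$) and a central region at scale $q\sim n^{-1/2}$ where the $q^k$ term is negligible for $k\ge 3$ (giving $\E_0[\Lambda^2]\to 1$) and yields $(1-\beta^2)^{-1/2}$ for $k=2$; the paper implements the central estimate via the representation $\<\bv,\be_1\>\ed G/\sqrt{G^2+Z_{n-1}}$ with $Z_{n-1}\sim\chi^2_{n-1}$, which is just a different way of writing your Beta density calculation. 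One small algebraic slip: with the normalization~(\ref{eq:symNoiseDefinition}) one gets $\E_0[\Lambda^2]=\int\exp\{\tfrac{n\beta^2}{2}\<\bv,\be_1\>^k\}\mu_n(\de\bv)$, so the rate function is $F_\beta(q)=\tfrac{\beta^2 q^k}{2}+\tfrac12\log(1-q^2)$ and the negativity condition is $\beta^2q^k<-\log(1-q^2)$, not $<-\tfrac12\log(1-q^2)$ as you wrote; you dropped the factor $\tfrac12$ twice in a self-compensating way, so your identified threshold $\bnd_k$ is still correct, but the intermediate equivalence as stated is wrong and should be fixed.
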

\noindent
The notation $\bnd_k$ refers to the fact that this is the threshold for
the second moment method to work.

\subsection{Reduction of spectral detection to the symmetric
        tensor model, $k=2$, and proof of Theorem \ref{thm:main}}

Recall that in the setup of Theorem \ref{thm:main},
$Q_{0,n}$ is the law of the eigenvalues of $\bX$
under $H_{0}$ and $Q_{1,n}$ is the law of the eigenvalues of $\bX$
under $H_{1,L}$.
Then $Q_{1,n}$ is invariant by conjugation of orthogonal matrices.
Therefore, the detection problem is not changed if we replace
$\bX= n^{-1/2}\bone_U\bone_U^{\sT} +\bZ$ by 
\begin{align}
\widehat{\bX}\equiv \bR\bX\bR^{\sT} = \frac{1}{\sqrt{n}}
\bR\bone_U(\bR\bone_U)^{\sT} + \bR\bZ\bR^{\sT}\, ,
\end{align}
 where $\bR\in O(n)$ is an orthogonal matrix sampled according to the Haar measure.
A direct calculations yields
\begin{align}
\widehat{\bX}=\beta\bu \bu^{\sT}+\widetilde{\bZ},
\end{align}
where $\bu$ is uniform on the $n$ dimensional sphere,
$\beta=L/\sqrt{n}$,
and $\widetilde{\bZ}$ is a GOE matrix (with off-diagonal entries of
variance $1/n$).
Furthermore, $\bu$ and $\widetilde{\bZ}$ are independent of one another.

Let $\prob_{1,n}$ be the law of $\widehat{\bX}$.
Note that $\prob_{1,n}=\prob_\beta^{(k=2)}$ with
$\beta=L/\sqrt{n}$.
We can relate the detection problem of $H_0$ vs. $H_{1,L}$ to
the detection problem of $\prob_{0,n}$ vs. $\prob_{1,n}$ as follows.
\begin{lemma}\label{lem:second_step_a}
        (a) If $\prob_{1,n}$ is contiguous with respect
        to $\prob_{0,n}$ then $H_{1,L}$ is spectrally contiguous with respect to
        $H_0$.\\
        (b) We have
        $$\|Q_{0,n}-Q_{1,n}\|_{\TV}\leq \|\prob_{0,n}-\prob_{1,n}\|_{\TV}.$$
\end{lemma}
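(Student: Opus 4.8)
The plan is to derive both parts from the single fact that passing from a matrix to its vector of eigenvalues is a measurable operation, together with the observation that this operation transports $\prob_{0,n}$ and $\prob_{1,n}$ to $Q_{0,n}$ and $Q_{1,n}$ respectively. Once this is in place, the two claims are instances of the general principle that pushforward by a measurable map can only decrease total variation distance and always preserves contiguity.

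Concretely, I would let $\Phi$ denote the map taking a symmetric $n\times n$ matrix to its ordered eigenvalue vector $(\lambda_1\ge\cdots\ge\lambda_n)$; this map is continuous, hence Borel measurable. Since $\widehat{\bX}=\bR\bX\bR^{\sT}$ has exactly the same eigenvalues as $\bX$ for every realization of the Haar-distributed $\bR\in O(n)$, one has $\Phi(\widehat{\bX})=\Phi(\bX)$ pointwise, and similarly under $H_0$ with $\bX=\bZ$. Consequently the law of the eigenvalues under $H_{1,L}$ equals $\Phi_*\prob_{1,n}$, i.e. $Q_{1,n}=\Phi_*\prob_{1,n}$, and likewise $Q_{0,n}=\Phi_*\prob_{0,n}$. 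This identification is precisely what the rotational-invariance reduction carried out just above the lemma provides.

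For part (b), I would note that for any Borel set $A$ of eigenvalue vectors the preimage $\Phi^{-1}(A)$ is Borel, so $|Q_{0,n}(A)-Q_{1,n}(A)|=|\prob_{0,n}(\Phi^{-1}(A))-\prob_{1,n}(\Phi^{-1}(A))|\le\|\prob_{0,n}-\prob_{1,n}\|_{\TV}$; taking the supremum over $A$ yields the stated inequality. For part (a), assume $\prob_{1,n}$ is contiguous with respect to $\prob_{0,n}$ and let $A_n$ be any sequence of events in eigenvalue space with $Q_{0,n}(A_n)\to 0$. Then $\prob_{0,n}(\Phi^{-1}(A_n))=Q_{0,n}(A_n)\to 0$, so contiguity gives $\prob_{1,n}(\Phi^{-1}(A_n))\to 0$, that is $Q_{1,n}(A_n)=\prob_{1,n}(\Phi^{-1}(A_n))\to 0$. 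Hence $Q_{1,n}$ is contiguous with respect to $Q_{0,n}$, which is by definition the assertion that $H_{1,L}$ is spectrally contiguous with respect to $H_0$.

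There is no genuine obstacle here: the whole argument is a routine measure-theoretic transfer. The only points deserving a line of care are the Borel measurability of $\Phi$ and the correct identification $Q_{i,n}=\Phi_*\prob_{i,n}$, which rests on the Haar-conjugation step used to pass from $H_{1,L}$ to $\widehat{\bX}$ and on the invariance of the spectrum under that conjugation.
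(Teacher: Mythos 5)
Your proof is correct and takes essentially the same approach as the paper: the paper phrases the transfer via the $\sigma$-algebra $\mathcal G$ generated by the ordered eigenvalues (on which $Q_{i,n}$ coincides with $\prob_{i,n}$), while you phrase it via the pushforward map $\Phi$, and these are the same argument. Your version is in fact slightly leaner, since you omit the identification of $\mathcal G$ with the $\sigma$-algebra of rotation-invariant events, which the paper states but does not actually need for either part of the lemma.
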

\begin{proof}
        $(a)$
Let $\mathcal{F}$ be the $\sigma$-algebra generated by all rotation-invariant
events,
and let $\mathcal{G}$ be the $\sigma$-algebra generated by the ordered eigenvalues. Then $\mathcal{F}=\mathcal{G}$.

Let $A_n\in \mathcal{G}$ be (spectral)
events so that $Q_{0,n}(A_n)\to_{n\to\infty} 0$. Note that (with some abuse
of notation) we have that $Q_{0,n}$ coincides with $\prob_{0,n}$ and
$Q_{1,n}$ coincides with $\prob_{1,n}$ on ${\cal G}$.
Then $\prob_{1,n}(A_n)\to_{n\to\infty} 0$ by assumption and therefore
$Q_{1,n}(A_n)=\prob_{1,n}(A_n)\to_{n\to\infty} 0$, proving the
contiguity of $Q_{1,n}$ with respect to $Q_{0,n}$.

\noindent
(b) We have
\begin{eqnarray}
        \label{eq-display}
        \sup_A{|\prob_{0,n}(A)-\prob_{1,n}(A)|} &\geq&
        \sup_{A \in \mathcal{F}}{|\prob_{0,n}(A)-\prob_{1,n}(A)|}=
        \sup_{A \in \mathcal{G}}{|\prob_{0,n}(A)-\prob_{1,n}(A)|}\nonumber \\
        &=&
        \sup_{A \in \mathcal{\mathcal{B}}}{|Q_{1,n}(A)-Q_{0,n}(A)|},
\end{eqnarray}
where $\mathcal{B}$ is the Borel $\sigma$-algebra in
$\Sigma^n:=\{\lambda_1 \geq \lambda_2 \ldots \geq \lambda_n\}$.
\end{proof}
\noindent
{\bf Remark:} The first inequality in \eqref{eq-display} (and hence
the inequality in point $(b)$ of the statement) is actually an
equality, but we will not use this fact.

In view of Lemma
\ref{lem:second_step_a}, Theorem  \ref{thm:main}
is an immediate consequence of Theorem \ref{main:Tensor}.

\section{Proof of Theorem \ref{main:Tensor}}
\label{sec-proofthemTensor}
The proof uses the following large deviations lemma, which follows,
for instance, from \cite[Proposition 2.3]{dembo2014matrix}.
\begin{lemma}\label{lemma:LargeDev}
Let $\bv$ a uniformly random vector on the unit sphere $\bbS^{n-1}$ and let
$\<\bv,\be_1\>$ be its first coordinate. Then, for any interval $[a,b]$ with
$-1\le a<b\le 1$
%
\begin{align}
\lim_{n\to\infty} \frac{1}{n}\log \prob(\<\bv,\be_1\>\in [a,b]) = \max\Big\{ \frac{1}{2}\log(1-q^2):\, q\in [a,b] \,\Big\}\, .
\end{align}
\end{lemma}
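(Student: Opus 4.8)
The final statement to prove is Lemma~\ref{lemma:LargeDev}: a large-deviation principle for the first coordinate $\langle\bv,\be_1\rangle$ of a uniform random unit vector $\bv\in\bbS^{n-1}$. My plan is to compute the probability in closed form using the known distribution of a single coordinate, then take the $\frac1n\log$ asymptotics via Laplace's method. Recall that if $\bv$ is uniform on $\bbS^{n-1}$, then $t=\langle\bv,\be_1\rangle$ has density proportional to $(1-t^2)^{(n-3)/2}$ on $[-1,1]$; equivalently, one can realize $\bv = \bg/\|\bg\|_2$ with $\bg\sim\normal(0,I_n)$ and note that $t^2 = \bg_1^2/(\bg_1^2+\cdots+\bg_n^2)$ has a $\mathrm{Beta}(1/2,(n-1)/2)$ distribution. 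Either representation yields
\begin{align}
\prob\big(\langle\bv,\be_1\rangle\in[a,b]\big) = \frac{\int_a^b (1-t^2)^{(n-3)/2}\,\de t}{\int_{-1}^1 (1-t^2)^{(n-3)/2}\,\de t}\, .
\end{align}

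\textbf{Key steps.} First I would establish the density formula above (citing the Gaussian-ratio construction or a standard reference, e.g.\ the same source \cite{dembo2014matrix} from which the lemma is quoted). Second, I would handle the denominator: the normalizing constant is a Beta integral, $\int_{-1}^1 (1-t^2)^{(n-3)/2}\,\de t = B(1/2,(n-1)/2) = \sqrt\pi\,\Gamma((n-1)/2)/\Gamma(n/2)$, and Stirling's formula gives $\frac1n\log$ of this tending to $0$; alternatively one observes the denominator is at most $2$ and at least the numerator, so it only contributes an $o(n)$ term and can be bounded crudely. Third, for the numerator I would apply Laplace's method to $\int_a^b e^{\frac{n-3}{2}\log(1-t^2)}\,\de t$: the exponential rate is dominated by the point $t\in[a,b]$ maximizing $\log(1-t^2)$, so $\frac1n\log\int_a^b (1-t^2)^{(n-3)/2}\,\de t \to \max_{q\in[a,b]}\frac12\log(1-q^2)$. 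The upper bound here is immediate from bounding the integrand by its max times $(b-a)$; the lower bound follows by restricting the integral to a small subinterval around the maximizer on which $\log(1-t^2)$ is within $\eps$ of its maximum (using continuity of $\log(1-t^2)$ on any compact subinterval of $(-1,1)$), and noting the length of that subinterval is a positive constant independent of $n$. Combining the numerator and denominator asymptotics yields the claimed limit.

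\textbf{Main obstacle.} The one genuine subtlety is the behavior near the endpoints $t=\pm1$, where $\log(1-t^2)\to-\infty$ and the density degenerates. If $[a,b]$ is a compact subinterval of $(-1,1)$ this is a non-issue, but the lemma allows $a=-1$ and $b=1$. To handle $b=1$ (say), I would note that the contribution of $[1-\delta,1]$ to the numerator is, for any fixed $\delta>0$, exponentially negligible — its $\frac1n\log$ is at most $\frac12\log(1-(1-\delta)^2)\to-\infty$ as $\delta\to0$ — so it never affects the maximum unless the maximizer of $\log(1-q^2)$ over $[a,b]$ is itself forced to the boundary, which happens only if $a=1$ or $b=-1$, a degenerate single-point interval where both sides equal $-\infty$ and the statement is vacuous (or $[a,b]$ contains $0$, in which case the max is $\frac12\log 1 = 0$ and is attained in the interior). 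So the endpoint issue is dispatched by a routine truncation argument; I do not expect it to require more than a few lines. The rest is a direct, standard Laplace-method computation.
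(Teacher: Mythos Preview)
Your proposal is correct and self-contained. Note, however, that the paper does not actually prove this lemma: it simply states that the result ``follows, for instance, from \cite[Proposition 2.3]{dembo2014matrix}'' and moves on. So there is no paper proof to compare against in detail.

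What you do differently is give an elementary, direct argument: you write down the explicit marginal density $(1-t^2)^{(n-3)/2}$ of a single coordinate (via the Gaussian-ratio representation), observe that the normalizing Beta integral contributes only $o(n)$ to $\log\prob$, and then apply Laplace's method to the numerator. This is entirely standard and works exactly as you describe; the endpoint issue at $t=\pm 1$ is correctly dispatched by your truncation remark, and the case where the maximizer of $q\mapsto \log(1-q^2)$ over $[a,b]$ sits at an endpoint of $[a,b]$ (e.g.\ when $0\notin[a,b]$) is handled by taking a one-sided neighborhood, as your lower-bound step implicitly allows. The advantage of your route is that it is fully self-contained and requires nothing beyond Stirling and a one-line Laplace estimate; the paper's route has the advantage of brevity, since the cited reference already packages the large-deviation principle for spherical coordinates.
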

\begin{proof}[Proof of Theorem \ref{main:Tensor}]
We denote by $\Lambda$ the Radon-Nikodym derivative of $\prob_{\beta}$ with respect to $\prob_0$.
By definition $\E_0\Lambda = 1$.
It is easy to derive the following formula
\begin{align}
\Lambda =\int \exp\Big\{-\frac{n\beta^2}{4} + \frac{n\beta}{2}\,\<\bX,\bv\ok\>\Big\} \, \mu_n(\de \bv)\, .
\end{align}
where $\mu_n$ is the uniform measure on $\bbS^{n-1}$. Squaring and using (\ref{eq:ExpTensor}), we get
\begin{align}
\E_0\Lambda^2 & = e^{-n\beta^2/2}\int\E_0
\exp\Big\{  \frac{n\beta}{2}\<\bX,\bv_1\ok+\bv_2\ok\>\Big\} \, \mu_n(\de \bv_1)\mu_n(\de\bv_2)\nonumber\\
& = e^{-n\beta^2/2}\int \exp\Big\{\frac{n\beta^2}{4}\big\|\bv_1\ok+\bv_2\ok\big\|_F^2\Big\}\mu_n(\de \bv_1)\mu_n(\de\bv_2) \nonumber\\
& = \int \exp\Big\{\frac{n\beta^2}{2}\<\bv_1,\bv_2\>^k\Big\} \, \mu_n(\de\bv_1)\mu_n(\de\bv_2)\nonumber\\
& = \int \exp\Big\{\frac{n\beta^2}{2}\<\bv,\be_1\>^k\Big\} \, \mu_n(\de \bv)\label{eq:SecondMomentCalculation}
\, ,
\end{align}
where in the first step we used \eqref{eq:ExpTensor} and in the last step, we used rotational invariance.

Let $F_{\beta}:[-1,1]\to \overline\reals$ be defined by
\begin{align}
F_{\beta}(q) \equiv \frac{\beta^2q^k}{2} +\frac{1}{2}\log(1-q^2)\,.
\end{align}
Using Lemma \ref{lemma:LargeDev}, for any $-1\le a< b\le 1$,
\begin{align}
\int \exp\Big\{\frac{n\beta^2}{2}\<\bv,\be_1\>^k\Big\} \,\ind(\<\bv,\be_1\> \in[a, b])\, \mu_n(\de \bv)
= \exp\Big\{ n\max_{q\in [a,b]} F_{\beta}(q)+ o(n)\Big\}\, .
\end{align}
It follows from the definition of $\bnd_k$ that $\max_{|q|\ge \eps} F_{\beta}(q)<0$ for any $\eps>0$.
Hence
\begin{align}
\E_0\Lambda^2 &  \le \int \exp\Big\{\frac{n\beta^2}{2}\<\bv,\be_1\>^k\Big\} \, \ind(|\<\bv,\be_1\>| \le \eps)\,
\mu_n(\de \bv) + e^{-c(\eps)n}\, ,
\end{align}
for some $c(\eps)>0$ and all $n$ large enough.
Next notice that, under $\mu_n$, $\<\bv,\be_1\> \ed G/(G^2+Z_{n-1})^{1/2}$ where $G\sim\normal(0,1)$ and
$Z_{n-1}$ is
a $\chi^2$ with $n-1$ degrees of freedom independent of $G$.
Then, letting $Z_n \equiv G^2+Z_{n-1}$ (a $\chi^2$ with $n$ degrees of freedom)
\begin{align}
\E_0\Lambda^2 & \le  \E \Big\{\exp\Big(\frac{n\beta^2}{2}\frac{|G|^k}{Z_n^{k/2}}\Big) \, \ind(|G/Z_n^{1/2}| \le \eps)\Big\}+ e^{-c(\eps)n}\nonumber\\
& \le  \E \Big\{\exp\Big(\frac{n\beta^2}{2}\frac{|G|^k}{Z_n^{k/2}}\Big) \, \ind(|G/Z_n^{1/2}| \le \eps)\, \ind(Z_{n-1}\ge n(1-\delta))\Big\}\nonumber\\
&\phantom{=}+e^{n\beta^2\eps^k/2}\prob\big\{Z_{n-1}\le n(1-\delta)\big\}+ e^{-c(\eps)n}\nonumber\\
&\le   \E \Big\{\exp\Big(\frac{n^{1-(k/2)}\beta^2}{2(1-\delta)^{k/2}}\, |G|^k\Big)
\ind(|G|^2\leq 2\eps n) \, \Big\}
+e^{n\beta^2\eps^k/2}\prob\big\{Z_{n-1}\le n(1-\delta)\big\}+ e^{-c(\eps)n}
\nonumber\\
&=
\frac{2}{\sqrt{2\pi}}\int_0^{2\eps n}e^{C(\beta,\delta)
  n^{1-k/2}x^k-x^2/2}\de x+e^{n\beta^2\eps^k/2}\prob\big\{Z_{n-1}\le n(1-\delta)\big\}+ e^{-c(\eps)n} \,,
\label{eq-long}
\end{align}
where $C(\beta,\delta)=\beta^2/(2(1-\delta)^{k/2})$. Now, for any $\delta>0$, we can (and will) choose $\eps$ small enough so that both
$e^{n\beta^2\eps^k/2}\prob\big\{Z_{n-1}\le n(1-\delta)\big\} \to 0$ exponentially fast (by tail bounds on $\chi^2$ random variables) and, if $k\geq 3$,
the argument of the exponent  in  the integral in the right hand side of \eqref{eq-long} is bounded above by $-x^2/4$, which is possible since the argument  vanishes at $x^*=2C(\beta,\delta)n^{1/2}$.
Hence, for any $\delta>0$, and all $n$ large enough, we have
\begin{align}
\E_0\Lambda^2 \le\frac{2}{\sqrt{2\pi}}\int_0^{2\eps
  n}e^{C(\beta,\delta) n^{1-k/2}x^k-x^2/2}\de x
+e^{-c(\delta)n}\,,
\label{eq-newlast}
\end{align}
for some $c(\delta)>0$.

Now, for $k\ge 3$ the  integrand in  \eqref{eq-newlast} is dominated by $e^{-x^2/4}$ and converges pointwise
(as $n\to\infty$) to $1$. Therefore,
since $\E_0\Lambda^2\ge (\E_0\Lambda)^2=1$,
\begin{align}
k\ge 3:\;\;\;\;\; \lim_{n\to\infty} \E_0\Lambda^2 = 1\, .
\end{align}
For $k=2$, the argument is independent of $n$ and can be integrated immediately, yielding (after taking the limit $\delta\to 0$)
\begin{align}
k=2:\;\;\;\;\; \lim\sup_{n\to\infty} \E_0\Lambda^2 \le \frac{1}{\sqrt{1-\beta^2}}\, .
\end{align}
 (Indeed,  the above calculation implies that the limit exists and is given by the right-hand side.)

The proof is completed by invoking Lemma  \ref{lemma:EasyLemma}.
\end{proof}

\subsection{Some remarks and consequences}
\label{sec:Remarks}

\noindent {\bf Threshold values.}
In the table below we report the numerical values of $\bnd_k$ for
a few values of $k$.  The exact value $\bnd_k=1$  for the
matrix case $k=2$ follows from $\log(1-q^2)\le -q^2$.

\vspace{0.25cm}
\begin{center}

\begin{tabular}{c|c}
$k$ & $\bnd_k$\\
\hline
$2$ & $1$ \\
$3$ & $1.398841$\\
$4$ & $ 1.566974$\\
$5$ & $1.67676$\\
$6$ & $1.757589$\\
$10$ & $1.955118$\\
$100$ & $2.595874$\\
\end{tabular}

\end{center}
\vspace{0.25cm}

Also, it is not difficult to derive the asymptotics $\bnd_k =
\sqrt{\log(k/2)} +o_k(1)$ for large $k$.

\vspace{0.25cm}

\noindent {\bf Tightness of the threshold values.} As mentioned in the
introduction, for $k=2$ and $\beta>1$, it is known that the largest
eigenvalue of $\bX$, $\lambda_1(\bX)$ converges almost surely to
$(\beta+1/\beta)$ \cite{BBAP}.
As a consequence $\|\prob_{0}-\prob_{\beta}\|_{\TV}\to 1$ for all
$\beta>1$:
the second moment bound is tight.

For $k\ge 3$, it follows by the
triangular inequality that $\|\bX\|_{op}\ge
\beta-\|\bZ\|_{op}$,
and further $\lim\sup_{n\to\infty}\|\bZ\|_{op}\le \mu_k$ almost surely
as $n\to\infty$ \cite{talagrand2006free,Auffinger13} for some bounded
$\mu_k$.
It follows that $\|\prob_{0}-\prob_{\beta}\|_{\TV}\to 1$ for all
$\beta>2\mu_k$ \cite{montanari2014statistical}.
Hence, the second moment bound is off by a $k$-dependent factor.
For large $k$,  $2\mu_k = \sqrt{2\log k} +O_k(1)$ and hence the factor is
indeed bounded in $k$.
\textcolor{red}{}

\vspace{0.25cm}

\noindent {\bf Behavior below the threshold.} Let us stress an
important qualitative difference between $k=2$ and $k\ge 3$,
for $\beta<\bnd_k$.
For $k\ge 3$, the two models are indistinguishable and any test is
essentially as good as random guessing. Formally, for any
measurable function $T:\otimes^k\reals^n\to\{0,1\}$, we have
\begin{align}
\lim_{n\to\infty}\big[\prob_0(T(\bX) = 1) + \prob_{\beta}(T(\bX) =
0)\big] =1\, .
\end{align}
For $k=2$, our result implies that, for $\beta<1$,
$\|\prob_{0}-\prob_{\beta}\|_{\TV}$ is bounded away from $1$. On the
other hand, it is easy to see that it is bounded away from $0$ as
well, i.e.
\begin{align}
0<\lim\inf_{n\to\infty}\|\prob_{0}-\prob_{\beta}\|_{\TV}\le
\lim\sup_{n\to\infty}\|\prob_{0}-\prob_{\beta}\|_{\TV}<1\, .
\end{align}
Indeed, consider for instance the statistics $S = \trace(\bX)$. Under
$\prob_0$, $S\sim \normal(0,2)$, while under $\prob_{\beta}$, $S\sim
\normal(\beta,2)$.
Hence
\begin{align}
\lim\inf_{n\to\infty}\|\prob_{0}-\prob_{\beta}\|_{\TV}\ge
\|\normal(0,1)-\normal(\beta/\sqrt{2},1)\|_{\TV}
= 1-2\Phi\Big(-\frac{\beta}{2\sqrt{2}}\Big)>0
\end{align}
(Here $\Phi(x) =\int_{-\infty}^x e^{-z^2/2}\de z/\sqrt{2\pi}$ is the
Gaussian distribution function.)
The same phenomenon for rectangular matrices ($k=2$) is discussed in detail in
\cite{onatski2013asymptotic}.

%

\section{Asymmetric tensor model}
\label{sec-asym}

As before, we  denote by  $\bG \in \bigotimes^k \reals^n$  a tensor with
independent and identically distributed entries
$\bG_{i_1, \cdots, i_k}\sim\normal(0,1)$.
We define the \emph{asymmetric standard normal} noise tensor $\bZ \in \bigotimes^k \reals^n$
by
\begin{align}\label{eq:asymNoiseDefinition}
\bZ =\sqrt{\frac 1 n} \,
\bG\, .
\end{align}
In particular, all entries are i.i.d. $\bZ_{i_1,\dots,i_k}\sim
\normal(0,1/n)$. With this normalization, we have of course
\begin{align}
\E\big\{e^{\< \bA,\bZ \>}\big\} =\exp\Big\{\frac{1}{2n} \|\bA\|_F^2\Big\}\, .\label{eq:ExpTensorAsymm}
\end{align}
Given $\lambda\in\reals_{\ge 0}$, we consider observations
$\bX\in\bigotimes^{k}\reals^n$ given by:
\begin{align}
\bX \equiv \lambda \,\bv_1\otimes\bv_2\otimes\cdots\otimes\bv_k + \bZ\,,\label{eq:AsymmetricModel}
\end{align}
with $\bZ$ a an asymmetric standard normal tensor, and $\bv_1$,\dots,
$\bv_k$ independent and uniformly
distributed over the unit sphere $\bbS^{n-1}$. In the case $k=2$ we
recover, again, the classical spiked model. Note that a further layer
of generalization would be obtained by considering a `rectangular'
tensor
$\bX\in \reals^{n_1}\otimes\cdots\otimes \reals^{n_k}$. We prefer to
assume $n_1=\dots = n_k$ to limit inessential technical complications.

We denote by $\prob_{\lambda}=\prob_{\lambda}^{(k)}$ the law of $\bX$
under the model (\ref{eq:AsymmetricModel})
\begin{theorem}
For $k\ge 2$, let $\lnd_k
\equiv(k/2)^{1/2}\bnd_{k}$, i.e.
\begin{align}
\lnd_k \equiv \inf_{q\in (0,1)}
\sqrt{-\frac{k}{2q^k}\,\log(1-q^2)}\, .
\end{align}
Assume $\lambda<\lnd_k$. Then, for any $k\ge 3$, we have
\begin{align}
\lim_{n\to\infty}\|\prob_{\lambda}-\prob_0\|_{\TV} = 0\, .
\end{align}
Further, for $\lambda<\lnd_2 = 1$, $\prob_{\lambda}$ is
contiguous with respect to $\prob_0$.
\end{theorem}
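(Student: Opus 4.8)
The plan is to mirror the proof of Theorem~\ref{main:Tensor} almost verbatim, with the bookkeeping adjusted for the asymmetric noise normalization \eqref{eq:ExpTensorAsymm} and the product structure $\bv_1\otimes\cdots\otimes\bv_k$ of the signal. First I would write down the Radon--Nikodym derivative $\Lambda = \de\prob_\lambda/\de\prob_0$. Since $\bZ = n^{-1/2}\bG$ has i.i.d. $\normal(0,1/n)$ entries, the likelihood ratio is a Gaussian change-of-measure integral over $\bv_1,\dots,\bv_k$:
\begin{align}
\Lambda = \int \exp\Big\{ n\lambda\,\langle \bX, \bv_1\otimes\cdots\otimes\bv_k\rangle - \frac{n\lambda^2}{2}\Big\}\, \mu_n(\de\bv_1)\cdots\mu_n(\de\bv_k)\,,\nonumber
\end{align}
using $\|\bv_1\otimes\cdots\otimes\bv_k\|_F = 1$. (I would double-check the exact constants against \eqref{eq:ExpTensorAsymm}; the point is that they work out.) Then I would square, take $\E_0$, and use \eqref{eq:ExpTensorAsymm} to integrate out $\bZ$, obtaining after cancellation
\begin{align}
\E_0\Lambda^2 = \int \exp\Big\{ n\lambda^2 \prod_{\ell=1}^k \langle \bv_\ell, \bv_\ell'\rangle \Big\}\, \prod_\ell \mu_n(\de\bv_\ell)\,\mu_n(\de\bv_\ell')\,.\nonumber
\end{align}
By rotational invariance of each $\mu_n$, the vector of overlaps $(\langle\bv_\ell,\bv_\ell'\rangle)_{\ell=1}^k$ has the same law as $k$ independent copies of $\langle\bv,\be_1\rangle$, so the integral factors into the $k$-th moment-type quantity that Lemma~\ref{lemma:LargeDev} controls coordinatewise.

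Second, I would apply Lemma~\ref{lemma:LargeDev} to each overlap. Writing $q_\ell = \langle \bv_\ell,\bv_\ell'\rangle$, the exponential-scale rate for the event $\{q_\ell \in [a_\ell,b_\ell]\ \forall \ell\}$ is $\sum_\ell \frac12\log(1-q_\ell^2)$, against the bulk term $n\lambda^2\prod_\ell q_\ell$. So the relevant variational problem is
\begin{align}
\sup_{q_1,\dots,q_k\in(-1,1)} \Big\{ \lambda^2\prod_{\ell=1}^k q_\ell + \sum_{\ell=1}^k \frac12\log(1-q_\ell^2)\Big\}\,.\nonumber
\end{align}
Restricting to the diagonal $q_1=\dots=q_k = q$ and using symmetry/concavity-type considerations, this reduces to $\sup_q \{\lambda^2 q^k + \frac k2\log(1-q^2)\}$, which is $\le 0$ precisely when $\lambda^2 \le \inf_{q\in(0,1)} \big(-\tfrac{k}{2q^k}\log(1-q^2)\big) = (\lnd_k)^2$; this is where the factor $k/2 = (\lnd_k/\bnd_k)^2$ enters, exactly as the theorem statement advertises. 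Hence for $\lambda < \lnd_k$ the contribution from $\{\max_\ell |q_\ell| \ge \eps\}$ decays exponentially for some $\eps>0$.

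Third, for the region $\{|q_\ell| \le \eps\ \forall \ell\}$ I would run the same chi-square representation and quantitative estimate as in the proof of Theorem~\ref{main:Tensor}: write $\langle\bv,\be_1\rangle \ed G/(G^2 + Z_{n-1})^{1/2}$, restrict to $Z_{n-1}\ge n(1-\delta)$ (the complement is exponentially negligible), and bound the integrand. For $k\ge 3$ the product $\prod_\ell |q_\ell|$ carries an effective $n^{1-k/2}$ factor against the Gaussian weights, so the integrand is dominated by $e^{-\sum_\ell x_\ell^2/4}$ and converges pointwise to $1$; dominated convergence plus $\E_0\Lambda^2 \ge 1$ gives $\E_0\Lambda^2 \to 1$, and Lemma~\ref{lemma:EasyLemma}(b) gives total variation vanishing. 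For $k=2$ the $n$-dependence is absent, the Gaussian integral is evaluated explicitly to give $\lim\sup_n \E_0\Lambda^2 \le 1/\sqrt{1-\lambda^2}$ (finite for $\lambda<1=\lnd_2$), and Lemma~\ref{lemma:EasyLemma}(a) yields contiguity.

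The main obstacle I anticipate is the multivariate variational problem: reducing $\sup_{q_1,\dots,q_k}\{\lambda^2\prod_\ell q_\ell + \sum_\ell \tfrac12\log(1-q_\ell^2)\}$ to its value on the diagonal, and checking that this value is strictly negative on $\{\max_\ell|q_\ell|\ge\eps\}$ exactly when $\lambda<\lnd_k$. One has to be slightly careful about signs (the product $\prod q_\ell$ can be negative) and about whether the supremum could be attained off-diagonal; a symmetrization/AM-GM argument bounding $\prod_\ell q_\ell \le \prod_\ell |q_\ell|$ together with an elementary one-variable analysis of $t\mapsto \lambda^2 t^k + \tfrac k2\log(1-t^2)$ should close this cleanly, but it is the one place the asymmetric proof genuinely differs in content from the symmetric one rather than just in constants. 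Everything else is a line-by-line transcription of Section~\ref{sec-proofthemTensor}.
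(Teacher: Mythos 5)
Your proposal is essentially correct and mirrors the paper's structure, with one step done genuinely differently. After obtaining
\begin{align}
\E_0\Lambda^2 = \int \exp\Big\{ n\lambda^2 \prod_{\ell=1}^k \langle \bv_\ell, \be_1\rangle \Big\}\, \mu_n^{\otimes k}(\de\bv)\nonumber
\end{align}
(same as the paper's Eq.~\eqref{eq:2ndMomentAsymmetric}, up to a typo there in the superscript), the paper handles the multivariate rate function $G_\lambda(\bq)=\lambda^2\prod_\ell q_\ell + \tfrac12\sum_\ell\log(1-q_\ell^2)$ by a Lagrangian critical-point analysis: it observes that multiplying the stationarity conditions by $q_i$ forces $q_i^2/(1-q_i^2)$ to be constant in $i$, hence any nonzero critical point lies on the diagonal, and then reduces to a one-variable check. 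You propose instead to bound $\prod_\ell q_\ell \le \prod_\ell |q_\ell| \le \tfrac1k\sum_\ell |q_\ell|^k$ (AM--GM applied to $|q_\ell|^k$), which decouples the variational problem into a sum of identical one-variable functionals $\tfrac{\lambda^2}{k}|q|^k + \tfrac12\log(1-q^2)$, each nonpositive exactly when $\lambda\le\lnd_k$. This is a valid alternative and arguably cleaner: it also decouples the small-overlap integral into $\big(\int e^{(n\lambda^2/k)|\langle\bv,\be_1\rangle|^k}\mu_n(\de\bv)\big)^k$, so the tail and dominated-convergence steps become a verbatim instance of the one-dimensional symmetric-case estimate with $\beta^2/2$ replaced by $\lambda^2/k$, rather than a parallel multivariate argument. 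The one place to be careful (and you flag it) is the sign: $\prod q_\ell$ may be negative, but $|\prod q_\ell|\le\prod|q_\ell|$ takes care of that.

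One minor arithmetic slip: for $k=2$ the exact Gaussian integral is $\E e^{\lambda^2 g_1 g_2} = (1-\lambda^4)^{-1/2}$, not $(1-\lambda^2)^{-1/2}$ (and the AM--GM-decoupled bound would give $(1-\lambda^2)^{-1}$). Either way the quantity is finite for $\lambda<1=\lnd_2$, so the contiguity conclusion via Lemma~\ref{lemma:EasyLemma}(a) is unaffected, but the constant as you wrote it is wrong.
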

\begin{proof}
The proof is very similar to the one of Theorem \ref{main:Tensor}. We
will therefore limit ourselves to outline the main steps, and the
differences with respect to the symmetric case.
First we compute the Radon-Nikodym derivative of $\prob_{\lambda}$
with respect to $\prob_0$:
\begin{align}
\Lambda = \int \exp\Big\{-\frac{n\lambda^2}{2} + n\lambda\,\<\bX,\bv_1\otimes\cdots\otimes\bv_k\>\Big\} \,\mu^{\otimes
k}_n(\de \bv)\, .
\end{align}
Here we introduced the notation $\mu^{\otimes k}_n(\de \bv)  = \mu_n(\de \bv_1)\cdots\mu_n(\de\bv_k)$
Proceeding as in the proof of Eq.~(\ref{eq:SecondMomentCalculation}),
we get
\begin{align}
\E_0\Lambda^2 =  \int
\exp\Big\{n\lambda^2\prod_{i=1}^k\<\bv_i,\be_1\>^k\Big\} \, \mu^{\otimes
k}_n(\de \bv)\, .\label{eq:2ndMomentAsymmetric}
\end{align}
Now, let $G_{\lambda}:[-1,1]^k\to \overline\reals$ be defined by
\begin{align}
G_{\lambda}(q_1,q_2,\dots,q_k) = \lambda^2\prod_{i=1}^k q_i\,
+\frac{1}{2}\sum_{i=1}^k \log(1- q_i^2)\, .
\end{align}
Invoking again Lemma \ref{lemma:LargeDev}, we obtain that for any
open set $J\in [-1,1]^k$
\begin{align}
\int
\exp\Big\{n\lambda^2\prod_{i=1}^k\<\bv_i,\be_1\>^k\Big\} \,
\ind\Big((\<\bv_1,\be_1\>,\dots,\<\bv_k,\be_1\>)\in J\Big)\mu^{\otimes
k}_n(\de \bv) = \exp\Big\{n\max_{\bq\in J}  G_{\lambda}(\bq) + o(n)\Big\}\, .
\end{align}
The key observation is that, for $\lambda<\lnd_k$, we have
$\max_{\bq\in [-1,1]^k} G_{\lambda}(\bq) =0$, with the maximum being
uniquely achieved at $\bq =0$. Once this claim is proved,
we can restrict the integral  (\ref{eq:2ndMomentAsymmetric}) to a
neighborhood of $(\<\bv_1,\be_1\>,\cdots,\<\bv_k,\be_1\>) =0$ and
obtain by an argument completely analogous to the symmetric case
%
%
$$k\ge 3\Rightarrow \lim_{n\to\infty} \E_0\Lambda^2 = 1\,.
\quad
k=2    \Rightarrow \lim\sup_{n\to\infty} \E_0\Lambda^2 \le \frac{1}{\sqrt{1-\lambda^4}}\,.
%
$$

To prove the above claim, and hence complete the proof, note that:
$\bq=0$ is a local maximum of $G_{\lambda}(\bq)$;
$G_{\lambda}(|q_1|,\dots,|q_k|)\ge G_{\lambda}(q_1,\dots,q_k)$;
$G_{\lambda}(\bq)\to -\infty$ if $\|\bq\|_{\infty}\to 1$. It is
therefore sufficient to prove that any other local maximum $\bq_*\in
[0,1)^k\setminus \{0\}$ has $G_{\lambda}(\bq_*)<0$. The stationarity
condition of $G_{\lambda}$ reads
\begin{align}
\lambda^2\prod_{j\in[k]\setminus i}q_j = \frac{q_i}{1-q_i^2}\,
,\;\;\forall i\in[k]\, ,
\end{align}
whence (after multiplying by $q_i$)
\begin{align}
\frac{q_1^2}{1-q_1^2} = \frac{q_2^2}{1-q_2^2} = \dots =
\frac{q_k^2}{1-q_k^2} \, .
\end{align}
since $x\mapsto x^2/(1-x^2)$ is strictly monotone increasing on
$(0,1]$, we deduce that $q_1=q_2=\dots= q_k$. It is therefore
sufficient to check $G_{\lambda}(q,q,\dots,q)< 0$ for all $q\in
(0,1)$. This is guaranteed by $\lambda<\lnd_k$.
\end{proof}

\section{Related work}
\label{sec-relwork}

Detection problems with similar flavor to the Gaussian hidden clique problem have been studied over the years in several fields including computer science, physics and statistics. Typically, in such problems there is ``planted" object with special properties along with random noise which makes the detection of the planted object a nontrivial task.
In the classical $G(n,1/2)$ planted clique problem,
the computational problem is to find the planted clique (of cardinality $k$) efficiently (e.g., in polynomial time) where we assume the location of the planted clique is hidden and is not part of the input. There are several algorithms that recover the planted clique in polynomial time when $k=C\sqrt{n}$ where $C>0$ is a constant independent of $n$ \cite{AlKS,Dekel,Feige1,Feige3}. In \cite{Desh} it is proven that a planted clique can be recovered in time $O(n^2\log(n))$, whenever $C=e^{-1/2}+\epsilon$. The work of \cite{AlKS} demonstrates that it is possible to find a planted clique of size $c\sqrt{n}$ in time $n^{O(\log(1/c))}$. Despite significant effort, no polynomial time algorithm for this problem is known when $k=o(\sqrt n).$
In the decision version of the planted clique problem, one
seeks an efficient algorithm that distinguishes between a random graph
distributed as $G(n,1/2)$ or a random graph containing a planted clique of size $k \geq(2+\delta)\log n$ (for $\delta>0$; the natural threshold for the problem
is the size of the largest clique in a random sample of
$G(n,1/2)$, which is asymptotic to $2\log n$ \cite{grimmett}).
No polynomial time algorithm is known for this
decision problem if $k=o(\sqrt{n})$. There are several hardness results for computational problems in game theory \cite{Minder} (see also \cite{Hazan}) and statistics \cite{Ber} which are based on the alleged hardness of the the problem of distinguishing
between a random graph distributed as $G(n,1/2)$ to a random graph
with a planted clique of size $k(n)$ (with $(2+\delta)\log n<k(n)\ll \sqrt{n}$).

As another example, consider the following setting introduced by \cite{Aries} (see also \cite{ADD}): one is given a realization of a $n$-dimensional Gaussian vector $\bx:=(\bx_1,..,\bx_n)$ with i.i.d. entries.
The goal is to distinguish between the following two hypotheses.
Under the first hypothesis,  all entries in $\bx$ are i.i.d. standard normals.
Under the second hypothesis,
one is given
a family of subsets $C:=\{S_1,...,S_m\}$
such that for every $1\leq k \leq m, S_k \subseteq \{1,...,n\}$ and there
exists an $i\in \{1,\ldots,m\}$ such that, for any $\alpha\in S_i$,
$\bx_\alpha$ is a Gaussian random variable with mean $\mu>0$
and unit variance whereas for every $\alpha
\notin S_i$, $\bx_\alpha$ is standard normal.
(The second hypothesis does not specify the index $i$, only its existence).
The main question is how large $\mu$ must be such that one can
reliably distinguish between these two hypotheses.
In \cite{Aries}, one considers two situations. In
the first, $\alpha$ are vertices
in a two dimensional grid of side length $n$ and
the family $C$ is the set of all directed
(i.e., with north or east steps only)  paths of length
$l$, starting from the bottom-left corner. In the second situation
treated in \cite{Aries}, the $\alpha$s correspond to the vertices of a binary tree, and again the family $C$ consists of loopless paths starting at the root.
In \cite{Aries}, both the min-max and Bayesian (with uniform
choice of $i$ in $C$) setups are considered.
 Other choices of $C$ are considered in \cite{ADD}:
the family of all subsets of size $k$, the set of all perfect matching
in a given graph and other examples. These detection problems
have practical applications-see \cite{Aries} for details.

The Gaussian hidden clique problem is related to various applications
in statistics and computational biology \cite{Balla,Kollar}. That
detection is statistically possible when $L \gg \log n$ was
established in \cite{ADD} (the authors
consider the case where all diagonal elements are zero, but since
the detection algorithm can simply ignore the diagonal elements,
their results apply to our setting as well). Similar results for the asymmetric case were obtained by \cite{Ma}. In terms of \emph{polynomial time} detection, \cite{Desh} show that detection is possible when $L=\Theta(\sqrt{n})$ for the symmetric cases. As noted, no polynomial time algorithm is known for the Gaussian hidden clique problem when $k=o(\sqrt{n})$. In \cite{ADD} it was hypothesized that the Gaussian hidden clique problem should be difficult when $L \ll \sqrt{n}$. More specifically \cite[Pg. 16, second paragraph]{ADD} comment that ``it seems likely
that designing an efficient test in the normal setting will prove as
difficult as it proved for planted cliques". Supporting evidence for this assertion was provided in \cite{Ma}, who proved that distinguishing between a planted model similar to the Gaussian planted clique model studied in our work and the random case (with entries being independent standard Gaussians) is at least as hard as distinguishing between a graph containing a planted clique and a graph distributed the random graph $G(n,1/2)$.

There is a large body of work in RMT that addresses the effect of
low rank perturbations on various properties the spectrum
and eigenvectors of Wigner matrices
(e.g., \cite{BBAP,Furedi,Knowles}), mostly in studying the
almost sure limits and
limit distributions of extremal eigenvalues and eigenvectors.

The closest results to ours are the ones of
\cite{onatski2013asymptotic}.
In the language of the present paper, these authors consider a
rectangular matrix of the form $\bX = \lambda\, \bv_1\bv_2^{\sT}
+\bZ\in\reals^{n_1\times n_2}$ whereby $\bZ$ has i.i.d. entries 
$\bZ_{ij}\sim\normal(0,1/n_1)$, $\bv_1$ is deterministic of unit norm, and $\bv_2$ has entries which are i.i.d. $\normal(0,1/n_1)$, independent of $\bZ$. They consider the problem of testing
this distribution against $\lambda=0$. 
Setting $c=\lim_{n\to\infty}\frac{n_1}{n_2}$, it is proved in
\cite{onatski2013asymptotic} that the distribution of 
the singular values of $\bX$ under the null and the alternative are mutually contiguous if $\beta< \sqrt{c}$
and not mutually contiguous if $\beta>\sqrt{c}$. This (almost) correspond to 
the asymmetric model of Section \ref{sec-asym}, for the matrix case
$k=2$.
While \cite{onatski2013asymptotic}  derive some more refined results,
their proofs rely on advanced tools from random matrix theory
\cite{Gui}, while our proof is simpler, and generalizable to other
settings (e.g. tensors).

\section{Conclusion}
In this work we considered detection problems for GOE matrices
perturbed by a deterministic rank $1$ matrix,
including the Gaussian hidden clique problem. We have
established that spectral methods stop being effective when the norm of the perturbation drops below a threshold, which translates in the Gaussian
hidden clique problem to the size of the planted submatrix being
smaller than $(1-\epsilon)\sqrt{n}$. In identifying this threshold we have also addressed detectability issues in rank-one perturbations of matrices and tensors which might be of independent interest.

There are several open problems that arise from the current work. First, in the context of the Gaussian hidden clique problem,
it would be interesting to provide an efficient algorithm for
finding a planted submatrix when $L=o(\sqrt{n})$ or rule
out certain algorithmic (non spectral)
approaches for this problem. One direction  is to study optimization
methods such as semidefinite programming and various types of
hierarchies (e.g., Lasserre, Sherali-Adams) in dealing
with the hidden clique problem for $L=o(\sqrt{n})$.

A natural question is whether one can use the spectrum in
order to distinguish between a graph distributed as $G(n,1/2)$ and
a random graph with a planted clique of size $L < (1-\epsilon)\sqrt{n}$.
Proving that one can or cannot distinguish between these
cases  using eigenvalues is a challenging open problem.

Finally, it might be interesting to study the limitations of
spectral techniques for other problems such as coloring \cite{Kahale}
and satisfiability \cite{Flaxman}.

\section*{Acknowledgments}
We thank Iain Johnstone for bringing
\cite{onatski2013asymptotic} to our attention.


\begin{thebibliography}{10}

\bibitem{ADD}
L.~Addario-Berry, N.~Broutin, L.~Devroye, G.~Lugosi.
\newblock On combinatorial testing problems.
\newblock {\em Annals of Statistics} 38(5) (2011), 3063--3092.

\bibitem{AlKS}
N.~Alon, M.~Krivelevich and B.~Sudakov.
\newblock Finding
a large hidden clique in a random graph.
\newblock {\em Random Structures and Algorithms} 13 (1998), 457--466.

\bibitem{Kahale}
N.~Alon and N.~Kahale,
\newblock A spectral technique for coloring random 3-colorable graphs
\newblock {\em SIAM Journal on Computing}
26 (1997), 1733--1748.



\bibitem{AGZ}
 G. W.  Anderson, A. Guionnet and O. Zeitouni.
 \newblock An introduction to random matrices.
 Cambridge University Press (2010).

\bibitem{Aries}
E. Arias-Castro, E. J., Cand\`{e}s, H. Helgason and O.
Zeitouni.
\newblock Searching for a trail of evidence in a maze.
{\em Annals of Statistics} 36 (2008), 1726--1757.

\bibitem{Auffinger13}
A.~Auffinger, G.~Ben~Arous, and J.~Cerny.
\newblock Random matrices and complexity
  of spin glasses.
  \newblock {\em Communications on Pure and Applied Mathematics}
  66(2) (2013), 165--201.

\bibitem{Balla}
S. Balakrishnan, M. Kolar, A. Rinaldo, A. Singh, and L. Wasserman.
\newblock Statistical and computational tradeoffs in biclustering.
\newblock {\em NIPS} Workshop on Computational Trade-offs in
Statistical Learning (2011).

\bibitem{Behmidi}
S. Bhamidi, P.S. Dey, and A.B. Nobel.
\newblock Energy landscape for large average submatrix detection
problems in Gaussian random matrices.
\newblock arXiv:1211.2284.

\bibitem{Ber}
Q.~Berthet and P.~Rigollet.
\newblock Complexity theoretic lower bounds for sparse principal component
detection.
\newblock {\em COLT} (2013), 1--21.

\bibitem{Boppana}
R.~V.~Bopanna.
\newblock Eigenvalues and graph bisection: An average-case analysis
\newblock In {\em FOCS} (1985), 280--285.




\bibitem{Desh}
\newblock Y. Deshpande and A. Montanari.
\newblock Finding hidden cliques of size {$\sqrt{N/e}$} in nearly linear time.
\newblock {\em Foundations of Computational Mathematics}  (2014), 1--60 

\bibitem{Dekel}
Y. Dekel, O. Gurel-Gurevich, and Y. Peres.
\newblock Finding hidden cliques in linear time with high probability.
\newblock {\em Combinatoircs Probability and Computing} 23(1) (2014), 29--49.

\bibitem{dembo2014matrix}
A. Dembo and O. Zeitouni.
\newblock Matrix optimization under random external
  fields.
  \newblock  arXiv:1409.4606

\bibitem{Flaxman}
A.~Flaxman.
\newblock A spectral technique for random satisfiable 3CNF formulas.
\newblock In {\em SODA} (2003), 357--363.

\bibitem{Feige1}
U.~Feige and R.~Krauthgamer.
\newblock Finding and certifying a large hidden clique in a semi-random graph.
\newblock {\em Random Struct. Algorithms} 162(2) (1999), 195--208.

\bibitem{Feige3}
U. Feige and D. Ron.
\newblock Finding hidden cliques in linear time.
\newblock {\em DMTCS Proceedings}  (2010), 189--204.


\bibitem{BBAP} D.~F\'{e}ral and S.~P\'{e}ch\'{e}.
 \newblock The largest eigenvalue of rank one deformation
of large Wigner matrices.
\newblock {\em Comm. Math. Phys.} 272 (2007), 185--228.

\bibitem{Furedi}
Z. F\"{u}redi and J. Koml\'{o}s,
\newblock The eigenvalues of random symmetric matrices.
\newblock {\em Combinatorica} 1 (1981), 233--241.

\bibitem{Gui}
A. Guionnet and M. Maida.
\newblock A Fourier view on $R$-transform and related asymptotics of spherical integrals.
\newblock {\em Journal of Functional Analysis} 222 (2005), 435--490.



\bibitem{grimmett}
        G.~R. Grimmett and C.~J.~H.~McDiarmid.
        \newblock On colouring random graphs.
        \newblock {\em Math. proc. Cambridge Philos. Soc.} 77 (1975),
        313--324.

\bibitem{Hazan}
E.~Hazan and R.~Krauthgamer.
\newblock How hard is it to approximate the best nash equilibrium?
\newblock {\em SIAM Journal on Computing} 40(1) (2011), 79--91.

\bibitem{Hsu} D.~Hsu,  S.~M.~Kakade, and T.~Zhang. 
\newblock A spectral algorithm for
learning hidden Markov models.
\newblock {\em Journal of Computer and System
Sciences} 78.5 (2012): 1460-1480.

\bibitem{J}
M.~Jerrum.
\newblock Large cliques elude the Metropolis process.
\newblock {\em Random Struct. Algorithms} 3(4) (1992), 347--360.

\bibitem{Kannan}
R.~Kannan and S.~Vempala.
\newblock Spectral Algorithms.
\newblock {\em Foundations and Trends in Theoretical Computer Science}
4 (2008), 132--288.

\bibitem{Knowles}
A.~Knowles and J.~Yin,
\newblock The isotropic semicircle law and deformation of Wigner matrices.
\newblock {\em Communications on Pure and Applied Mathematics} 66(11) (2013), 1663--1749.

\bibitem{Kollar}
M.~Kolar, S.~Balakrishnan, A.~Rinaldo, and A.~Singh.
\newblock Minimax localization of structural information in large noisy matrices.
In Advances in {\em NIPS} (2011), 909--917.


\bibitem{talagrand2006free}
M. Talagrand.
\newblock Free energy of the spherical mean field model.
\newblock
{\em Probability theory and related fields} 134(3) (2006), 339--382.

\bibitem{Ma}
Z~Ma and Y~Wu.
\newblock Computational barriers in minimax submatrix detection.
\newblock arXiv:1309.5914.


\bibitem{McSherry}
F.~McSherry.
\newblock Spectral partitioning of random graphs.
\newblock In {\em FOCS} (2001), 529--537.

\bibitem{Minder}
L.~Minder and D.~Vilenchik.
\newblock Small clique detection and approximate Nash equilibria.
\newblock In {\em RANDOM} (2009).

\bibitem{montanari2014statistical}
A. Montanari and E. Richard.
\newblock A Statistical Model for Tensor
  PCA.
  \newblock{\em Neural Information Processing Systems (NIPS)}, 2014.

  \bibitem{onatski2013asymptotic}
A. Onatski, M.~J. Moreira, M. Hallin, et~al.
\newblock Asymptotic power
  of sphericity tests for high-dimensional data.
  \newblock {\em The Annals of Statistics}
  41(3) (2013),  1204--1231.


\bibitem{Waterhouse90}
W.~C. Waterhouse.
\newblock The absolute-value estimate for symmetric multilinear
  forms.
  \newblock {\em Linear Algebra and its Applications}
  128 (1990), 97--105.
\end{thebibliography}
\end{document}